\documentclass[12pt,en]{elegantpaper}
\usepackage{esint}
\usepackage{extarrows}
\usepackage{mathrsfs}
\numberwithin{equation}{section}
\allowdisplaybreaks[4]
\everymath{\displaystyle}

\title{Liouville theorem for a class of p-Laplace type equations on manifolds}

\author{Xi-Nan Ma\and Wei Wei\and Tian Wu \and Hua Zhu}

\begin{document}

\date{}
\maketitle

\renewcommand{\thefootnote}{\fnsymbol{footnote}}
\newcommand{\Red}[1]{\textcolor{red}{#1}}
\newcommand{\Blue}[1]{\textcolor{blue}{#1}}
\newcommand{\Green}[1]{\textcolor{green}{#1}}


\begin{abstract}
    \setlength{\abovedisplayskip}{2pt}
    \setlength{\belowdisplayskip}{2pt}
    \setlength{\abovedisplayshortskip}{0pt}
    \setlength{\belowdisplayshortskip}{0pt}
    \hspace{1em}We study a class of $p$-Laplace equations
    $$\Delta_p u-\lambda u^{p-1}+ u^{q-1}=0$$
    on a closed $n$-dimensional Riemannian manifold $(M,g)$ with $\operatorname{Ric}\geqslant(n-1)g$. For
    $1<p<2$, $p<q<p^*$, and $0<\lambda<S_{p,q}^{-1}$, where
    $$S_{p,q}=\frac{q-p}{2}(\frac p n)^{\frac p 2}\Big(\frac{(p^*-1)^2(2-p)}{(p_*-1)(q-1)(p^*-q)}\Big)^{\frac{2-p}{2}},$$
    with $p_*=\frac{(n-1)p}{n-p}$ and $p^*=\frac{np}{n-p}$, we prove that the constant $\lambda^{\frac{1}{q-p}}$ is the unique positive solution of the equation. In contrast, for $p>2$ and $p<q<p^*$, the uniqueness fails for every $\lambda>0$; aside from the constant solution, the equation admits a positive nonconstant solution. This answers V\'eron's problem raised in \cite{Ver92}.
    \keywords{quasilinear elliptic equations, invariant tensor technique, Liouville theorems, existence of nonconstant solutions.}\\
    \textbf{2020 Mathematics Subject Classification:} 35J92, 35B09, 35B53.
\end{abstract}

\section{Introduction}\label{sec-introduction}

For the $p$-Laplace operator on a manifold $(M^n,g)$, V\'{e}ron \cite{Ver92} raised the following two open problems:
\begin{itemize}
    \item[(1)] For $1<p<n$ and $p<q<p^*:=\frac{np}{n-p}$, under what conditions on $\lambda$, $q$ and $\gamma>0$, are the only positive solutions to
    \begin{equation}\label{eq:q}
        \Delta_p u-\lambda u^{p-1}+u^{q-1}=0
    \end{equation}
    constant? The $p$-Laplace operator is given by $\Delta_p u:=\mathrm{div}(|\nabla u|^{p-2}\nabla u)$.
    \item[(2)] For $1<p<n$ and $p<q<p^*$, under what conditions on $\lambda$, $q$ and $\gamma>0$, are the only positive solutions to
    $$\operatorname{div}[(\gamma^2 u^2+|\nabla u|^2)^{\frac{p-2}{2}}\nabla u]-\lambda(\gamma^2 u^2+|\nabla u|^2)^{\frac{p-2}{2}}u+\gamma^{p-2}u^{q-1}=0$$
    constant?
\end{itemize}
The second problem was resolved by Lin and Ma \cite{LM2024} on $\mathbb S^n$ under the condition that
$$\gamma=\frac{p}{q-p},\quad\lambda=\gamma[n+1-(q-1)\gamma],$$
which was also conjectured by V\'{e}ron \cite{Ver17}.

In this paper, we answer the first problem. Specifically, we derive a Liouville theorem for $1<p<2$ and $0<\lambda<S_{p,q}^{-1}$, and prove the existence of nonconstant solutions for $p>2$.

\begin{theorem}\label{thm:q}
    Suppose $(M,g)$ is a closed Riemannian manifold of dimension $n\geqslant2$ satisfying $\operatorname{Ric}\geqslant (n-1)g$, $1<p<2$, $p<q<p^*$, and $0<\lambda<S_{p,q}^{-1}$, where
    \begin{equation}\label{cond}
        \quad S_{p,q}=\frac{q-p}{2}(\frac p n)^{\frac p 2}\big(\frac{(p^*-1)^2(2-p)}{(p_*-1)(q-1)(p^*-q)}\big)^{\frac{2-p}{2}}.
    \end{equation}
   Then every positive solution to the equation \eqref{eq:q} is the constant $\lambda^{\frac{1}{q-p}}$.
\end{theorem}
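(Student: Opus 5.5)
We will argue with the invariant tensor (integral identity) technique in the spirit of Obata and Bidaut-V\'eron--V\'eron, adapted to the $p$-Laplacian as in Serrin--Zou and Lin--Ma. The first step is a change of unknown. We write $u=v^{-\beta}$ (equivalently $u=e^{-w}$ with a power weight) with $\beta>0$ to be chosen. In terms of $v$, \eqref{eq:q} becomes an equation for $\Delta_p v$ containing a gradient term $|\nabla v|^p/v$, a term in $\lambda v^{p-1}$ and a term in $v^{\beta(p-q)+p-1}$.

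The main object is the stress-type tensor
$$E_{ij}=|\nabla v|^{p-2}v_{ij}+(p-2)|\nabla v|^{p-4}v_iv_kv_{kj}-\frac{1}{n}\Delta_p v\,g_{ij}.$$
This is the traceless part of the linearized $p$-Hessian, and $E\equiv0$ characterizes the model solutions. Since $|\nabla v|$ may vanish and the equation degenerates, we first regularize. We replace $|\nabla v|^2$ by $|\nabla v|^2+\varepsilon$, or else work on $\{\nabla v\neq0\}$ using the $C^{1,\alpha}$ and $W^{2,2}_{loc}$ regularity of $|\nabla v|^{p-2}\nabla v$ available for $1<p<2$, and let $\varepsilon\to0$ at the end.

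The second step is a weighted integral identity. We multiply by $v^{a}|\nabla v|^{b}$ and integrate $\int_M v^{a}\,E_{ij}\,(|\nabla v|^{p-2}v_i)_j$-type quantities by parts. The Bochner formula for the $p$-Laplacian together with $\operatorname{Ric}\geqslant(n-1)g$ then gives an inequality of the form
$$0\geqslant \int_M v^{a}|\nabla v|^{b}\Big(c_1|E|^2+c_2\frac{|\nabla v|^{2p}}{v^2}+c_3\lambda\,|\nabla v|^{p}\,v^{p-2}+\cdots\Big),$$
where the exponents $a,\beta$ are free parameters. The restriction $q<p^*$ is what allows $\beta$ and $a$ to be chosen so that the coefficient of the $|\nabla v|^{2p}/v^{2}$ term is nonnegative; this mirrors the Euclidean Serrin--Zou computation, and the quantities $p_*$ and $p^*$ appear there.

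The key difficulty, and the place where $p<2$ and $\lambda<S_{p,q}^{-1}$ enter, is the cross term. This is the mixed term $\lambda v^{\cdot}|\nabla v|^{p}$, which comes from the $\lambda u^{p-1}$ part combined with the curvature term $(n-1)|\nabla v|^{2}|\nabla v|^{2(p-2)}$. For $p=2$ these terms scale the same way. For $p\neq2$ the curvature term carries $|\nabla v|^{2p-2}$ while the $\lambda$-term carries a different power of $|\nabla v|$. For $p<2$ we control the bad term by Young's inequality with exponents $\frac{2}{p}$ and $\frac{2}{2-p}$, interpolating between the good $|\nabla v|^{2p}/v^2$ term and a good zeroth-order contribution. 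Optimizing the Young constants produces exactly the factor $(\frac{p}{n})^{p/2}\big(\frac{(p^*-1)^2(2-p)}{(p_*-1)(q-1)(p^*-q)}\big)^{(2-p)/2}\frac{q-p}{2}=S_{p,q}$. Closing the argument therefore requires $\lambda S_{p,q}<1$, and this is the step I expect to be hardest: the exponents $\beta,a$ must be chosen optimally and all remaining terms must have a sign. If this balancing fails, my first fallback is to use the eigenvalue-type normalization coming from integrating the equation, $\int\lambda u^{p}=\int u^{q}-\int|\nabla u|^p$, to trade the $\lambda$-terms.

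Once the integral is forced to vanish, we get $\nabla v\equiv0$, or $E\equiv0$ together with the vanishing of a positive multiple of $|\nabla v|^{2p}/v^2$, which again forces $\nabla v\equiv0$. Hence $u$ is constant, and the equation then gives $\lambda u^{p-1}=u^{q-1}$, so $u=\lambda^{\frac{1}{q-p}}$.
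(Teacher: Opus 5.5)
Your architecture is the paper's in disguise. For $v=u^{\gamma}$ with $\gamma=-1/\beta$, the traceless part of $\nabla_j(|\nabla v|^{p-2}v_i)$ equals $|\gamma|^{p-2}\gamma\,u^{(\gamma-1)(p-1)}$ times the paper's tensor $E_{ij}$ with $c=(\gamma-1)(p-1)$. Your closing step is also exactly how the paper produces $S_{p,q}$: Young with exponents $\frac2p,\frac2{2-p}$, trading the $\lambda$-term against the curvature term $n|\nabla u|^{2p-2}$ and the $|\nabla u|^{2p}/u^2$ term.

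\textbf{The gap.} What makes the argument work is hidden in your ``$\cdots$'' and in the claim that $a,\beta$ are free parameters to be optimized; they are not free. Computing $u^{\kappa}\nabla^i(u^{-\kappa}E_{ij}|\nabla u|^{p-2}u^j)$ gives $E_{ij}E^{ji}$, the Ricci term, the $\lambda$-term and $-\frac{n-1}{n}c\big(1+\frac{c}{p_*-1}\big)\frac{|\nabla u|^{2p}}{u^2}$. It also gives two further terms.
\begin{itemize}
\item A cross term $\big(-c\frac{p_*-2}{p_*-1}-\kappa\big)\frac{|\nabla u|^{p-2}}{u}E_{ij}u^iu^j$. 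It has no sign, so the weight must kill it.
\item The contribution $-\frac{n-1}{n}\big(q-1+\frac{p^*-1}{p_*-1}c\big)u^{q-2}|\nabla u|^p$ of the nonlinearity. Under $u\mapsto tu$ it has homogeneity $q+p-2$, while every other term has $2p-2$. So it cannot be absorbed pointwise by Young, and its coefficient must be nonnegative: $c\leqslant-\frac{p_*-1}{p^*-1}(q-1)$.
\end{itemize}
The good coefficient of $|\nabla u|^{2p}/u^2$ is positive for every $c\in(-(p_*-1),0)$. So $q<p^*$ enters as the compatibility of these two requirements, not, as you say, as the condition making that coefficient nonnegative.

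On the admissible range, put $t=-c/(p_*-1)$. The largest $\lambda$ allowed by the Young step is proportional to $\big(t(1-t)\big)^{\frac{2-p}2}/\big((p^*-1)t-(p-1)\big)$, which is strictly decreasing in $t$. Hence $S_{p,q}$ is reached only at the endpoint $c=-\frac{p_*-1}{p^*-1}(q-1)$. There the $u^{q-1}$-term vanishes and the good coefficient becomes $\frac{(p_*-1)(q-1)(p^*-q)}{(p^*-1)^2}$.

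\textbf{The sign restriction on $\beta$.} In your variables this endpoint is $\beta=\frac{n(p-1)+p}{n(q-p)-p}$. This is negative for $p<q<p+\frac pn$, and at $q=p+\frac pn$ it corresponds to the logarithmic substitution. But $\beta>0$ means $c<-(p-1)$, i.e. $t>\frac{n-p}{n}\geqslant\frac{q-1}{p^*-1}$ in that range. So for $p<q\leqslant p+\frac pn$ your standing assumption $\beta>0$ puts you strictly inside the admissible range. The argument as set up then yields a strictly smaller interval of $\lambda$ than $\lambda<S_{p,q}^{-1}$. You must drop the sign restriction, or do as the paper does: keep $u$ and put the correction $c\,|\nabla u|^{p-2}u_iu_j/u$ into $E_{ij}$.

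\textbf{The quadratic term.} For $p\neq2$ your $E_{ij}$ is not symmetric, and the divergence produces $E_{ij}E^{ji}$, not $|E|^2$. Its nonnegativity is not automatic; it needs the paper's frame computation. With $u_1=|\nabla u|$ and the lower block of the Hessian diagonalized, the off-diagonal $E_{ij}$ vanish for $i,j\geqslant2$ and $E_{1i}=(p-1)E_{i1}$. This step uses $p>1$.
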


This Liouville theorem implies the $p$-Sobolev inequality. We denote $\fint_M h:=\frac{1}{\operatorname{vol}(M)}\int_M h$.

\begin{corollary}
    Under the hypotheses of Theorem \ref{thm:q},
    \begin{equation}\label{sobolevforp<2}
    \big(\fint_M|v|^q\big)^{\frac p q}\leqslant S_{p,q}\fint_M|\nabla v|^p+\fint_M|v|^p,\quad v\in W^{1,p}(M).
    \end{equation}
    Equality holds in \eqref{sobolevforp<2} if and only if $v$ is constant.
\end{corollary}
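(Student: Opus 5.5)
We derive the Corollary from Theorem \ref{thm:q} by a standard variational argument. Normalize $\operatorname{vol}(M)=1$, write $\int$ for $\fint$, and fix $\lambda\in(0,S_{p,q}^{-1})$. Consider the functional
$$E_\lambda(v)=\frac{\int_M|\nabla v|^p+\lambda\int_M|v|^p}{\big(\int_M|v|^q\big)^{p/q}},\qquad v\in W^{1,p}(M)\setminus\{0\},$$
and let $\mu_\lambda=\inf E_\lambda$. Since constants give $E_\lambda=\lambda$, we have $\mu_\lambda\le\lambda$. The key claim is that $\mu_\lambda=\lambda$ for every $\lambda<S_{p,q}^{-1}$. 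Multiplying by $S_{p,q}$ and letting $\lambda\uparrow S_{p,q}^{-1}$ then gives \eqref{sobolevforp<2}.

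\textbf{Attainment of the infimum.} Because $q<p^*$, the embedding $W^{1,p}(M)\hookrightarrow L^q(M)$ is compact. Take a minimizing sequence normalized by $\int|v_k|^q=1$, and replace $v_k$ by $|v_k|$, which does not change $E_\lambda$. The sequence is bounded in $W^{1,p}$, so a subsequence converges weakly in $W^{1,p}$ and strongly in $L^q$ and $L^p$. Weak lower semicontinuity of the gradient term then yields a nonnegative minimizer $v$ with $\int v^q=1$. Its Euler--Lagrange equation is
$$\Delta_p v-\lambda v^{p-1}+\mu_\lambda v^{q-1}=0.$$
By regularity for the $p$-Laplacian, $v\in C^{1,\alpha}$, and by the strong maximum principle of V\'azquez (using $v\ge0$, $v\not\equiv0$), $v>0$. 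Set $u=\mu_\lambda^{1/(q-p)}v$, which is admissible since $\mu_\lambda>0$. This $u$ is a positive solution of \eqref{eq:q}. Theorem \ref{thm:q} forces $u$ to be constant, so $v$ is constant, and therefore $\mu_\lambda=E_\lambda(\text{const})=\lambda$.

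\textbf{Passing to the limit.} For each $\lambda<S_{p,q}^{-1}$ we now have $\lambda(\int|v|^q)^{p/q}\le\int|\nabla v|^p+\lambda\int|v|^p$. Dividing by $\lambda$ gives
$$\Big(\int|v|^q\Big)^{p/q}\le\lambda^{-1}\int|\nabla v|^p+\int|v|^p.$$
Letting $\lambda\to S_{p,q}^{-1}$ yields \eqref{sobolevforp<2}.

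\textbf{Equality case.} If $v$ is constant, both sides of \eqref{sobolevforp<2} equal $|v|^p$. For the converse, suppose $v\not\equiv$ const attains equality. Then $\int|\nabla v|^p>0$: otherwise $v$ would be constant on the connected manifold $M$. Pick any $\lambda<S_{p,q}^{-1}$, so that $\lambda^{-1}>S_{p,q}$. Then
$$\Big(\int|v|^q\Big)^{p/q}=S_{p,q}\int|\nabla v|^p+\int|v|^p<\lambda^{-1}\int|\nabla v|^p+\int|v|^p,$$
which means $E_\lambda(v)>\lambda$. This is still consistent with $\mu_\lambda=\lambda$, so the inequality-at-$\lambda$ argument alone gives no contradiction, and this is the main obstacle. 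The problem is that at the endpoint $\lambda=S_{p,q}^{-1}$ Theorem \ref{thm:q} is not available, so rigidity cannot come from it directly.

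I would handle the endpoint as follows. Assume $v$ attains equality. Then $|v|$ does too, so $|v|$ minimizes $E_{\lambda_0}$ at $\lambda_0=S_{p,q}^{-1}$ with minimum value $\lambda_0$, and its Euler--Lagrange equation is \eqref{eq:q}-type at $\lambda_0$. Now perturb: take $w=1+\varepsilon\varphi$ with $\varphi$ a first eigenfunction and examine the second variation at constants. The expected outcome is that $\lambda=S_{p,q}^{-1}$ is strictly below the first threshold at which constants stop being local minimizers, since for $1<p<2$ the term $|\nabla\varphi|^p$ dominates the quadratic terms in $\varepsilon$. If that expansion does not close the argument, the fallback is to check that the integral identity behind Theorem \ref{thm:q} remains valid, with non-strict inequalities, at $\lambda=S_{p,q}^{-1}$. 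Extracting the endpoint rigidity either way is the delicate step.
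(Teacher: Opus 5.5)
\textbf{Inequality.} Your derivation of the inequality is correct. It follows the route the paper intends: the paper gives no separate proof, and the argument is the minimization of Section~\ref{sec:unstable}, Step~1, combined with Theorem~\ref{thm:q} and then $\lambda\uparrow S_{p,q}^{-1}$.

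\textbf{Gap: the equality case.} The gap is the ``only if'' half of the equality statement, which you leave open.

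Your first suggestion cannot close it. The second variation at constants is a local statement, so it cannot exclude a nonconstant function attaining equality. Moreover, for $1<p<2$ the term $|\varepsilon|^p\fint_M|\nabla\varphi|^p$ makes constants local minimizers for every $\lambda>0$, so this expansion does not even detect $S_{p,q}$.

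Your fallback also fails as stated. Rerunning the proof of Theorem~\ref{thm:q} at $\lambda_0=S_{p,q}^{-1}$ with non-strict inequalities yields only $0\geqslant 0$. The reason is that $S_{p,q}$ is exactly the value at which the final coefficient of $|\nabla u|^{2p}/u^2$ vanishes.

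\textbf{Missing idea: equality in the Young step.} Set $A=\frac{(p_*-1)(q-1)(p^*-q)}{(p^*-1)^2}$ and $t=|\nabla u|^2/u^2$. The inequality used there reads
\[
D:=n|\nabla u|^{2p-2}+A\frac{|\nabla u|^{2p}}{u^2}-\lambda_0(q-p)|\nabla u|^pu^{p-2}=|\nabla u|^pu^{p-2}\big(nt^{\frac{p-2}{2}}+At^{\frac p2}-\lambda_0(q-p)\big)\geqslant0.
\]
A short computation shows that $\min_{t>0}\big(nt^{\frac{p-2}{2}}+At^{\frac p2}\big)$ equals exactly $\lambda_0(q-p)$. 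This minimum is attained only at $t_0=\frac{n(2-p)}{pA}>0$.

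Now integrate the weighted identity behind \eqref{ineq:id}. The nonnegative terms $D$, $E_{ij}E^{ji}$ and the Ricci excess all vanish a.e. Since $D$ is continuous on $M$ (because $u\in C^{1}$ and $u>0$), $D$ vanishes everywhere. Hence $|\nabla u|/u\in\{0,\sqrt{t_0}\}$ on $M$.

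This is a continuous function on the connected manifold $M$, so it is constant. It vanishes at a maximum point of $u$, so $\nabla u\equiv0$. Thus Theorem~\ref{thm:q} also holds at $\lambda=S_{p,q}^{-1}$.

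\textbf{Finishing.} Apply this to the rescaled $|v|$ (positive by your maximum-principle step) to get $|v|\equiv\mathrm{const}$. Since $v$ itself attains equality, it is a minimizer. It therefore solves $\Delta_p v-\lambda_0|v|^{p-2}v+\mu|v|^{q-2}v=0$ and is $C^1$. By connectedness, $v$ is constant.

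Without this endpoint argument, your proposal establishes the inequality but not the characterization of the equality case.
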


In the limit as $p\to2^-$, inequality \eqref{sobolevforp<2} formally reduces to the classical Sobolev inequality:   
$$\Big(\fint_M |v|^q\Big)^{\frac 2 q}\leqslant\frac{q-2}{n}\fint_M |\nabla v|^2+\fint_M v^2,\quad v\in H^1(M),\quad 2<q<2^*=\frac{2n}{n-2}.$$
Furthermore, when specialized to the case $p=2$,  the proof of Theorem \ref{thm:q} recovers the classical semilinear result of Bidaut-V\'eron and V\'eron \cite{BV91} as follows.

\begin{theorem}[\cite{BV91,Oba71}]
    Suppose $(M,g)$ is a closed Riemannian manifold of dimension $n\geqslant3$ satisfying $\operatorname{Ric}\geqslant (n-1)g$, $2<q<2^*$, and $0<\lambda\leqslant\frac{n}{q-2}$. Then every positive solution to the equation $\Delta u-\lambda u+u^{q-1}=0$ is the constant $\lambda^{\frac{1}{q-2}}$.
\end{theorem}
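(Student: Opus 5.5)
The last statement is the classical Bidaut-V\'eron--V\'eron theorem. The proof I propose is the $p=2$ specialization of the method for Theorem~\ref{thm:q}: an integral identity built from an invariant (traceless) tensor, combined with the Bochner formula and $\operatorname{Ric}\geqslant(n-1)g$.

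First I substitute $u=v^{-\beta}$, or equivalently $u=e^{-f}$ with a power weight, and choose the exponent later. The transformed equation has the form
$$\Delta v = a\frac{|\nabla v|^2}{v}+b\,v+c\,v^{\sigma}$$
for constants $a,b,c$ depending on $\beta,\lambda,q$. I then introduce the traceless tensor
$$E_{ij}=v_{ij}-\frac{\Delta v}{n}g_{ij},$$
or a version of it corrected by $\frac{v_iv_j}{v}$ terms, and consider the integral
$$\int_M v^{\gamma}E_{ij}v_iv_j$$
with a free weight exponent $\gamma$. I compute it in two ways:
\begin{itemize}
\item by integrating by parts and using the Bochner identity $\frac12\Delta|\nabla v|^2=|\nabla^2v|^2+\langle\nabla v,\nabla\Delta v\rangle+\operatorname{Ric}(\nabla v,\nabla v)$;
\item by substituting the equation.
\end{itemize}
Together with $|\nabla^2v|^2\geqslant|E|^2+\frac{(\Delta v)^2}{n}$ and $\operatorname{Ric}\geqslant(n-1)g$, this should give an identity of the form
$$0=\int_M v^{\gamma}\Big(|E|^2+A\frac{|\nabla v|^4}{v^2}+B|\nabla v|^2+C\,v^{\sigma-1}|\nabla v|^2\Big)+\int_M\big(\operatorname{Ric}-(n-1)g\big)(\nabla v,\nabla v)\,v^{\gamma},$$
where $A,B,C$ are explicit functions of the parameters.

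The key step is choosing $\beta$ and $\gamma$ so that every term has a sign. The $v^{\sigma}$ term coming from $u^{q-1}$ should cancel or combine with the $\lambda$ term; this is where the structure $q<2^*$ enters. The classical choice in the Bidaut-V\'eron--V\'eron/Obata argument is the weight $u^{-\frac{?}{}}$ determined by $\gamma$, and for $2<q<2^*$ one can arrange $A\geqslant0$. With that choice the remaining gradient coefficient is a positive multiple of $\frac{n}{q-2}-\lambda$ (equivalently $(n-1)+\frac{n-1}{\cdot}$ minus $\lambda$-dependent terms), which is nonnegative exactly when $\lambda\leqslant\frac{n}{q-2}$. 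I expect this to be the main obstacle: tuning the two exponents so the quartic gradient term has a sign while keeping the admissible range up to the sharp endpoint $\lambda=\frac{n}{q-2}$. To handle it I would optimize $\gamma$ by completing the square in the discriminant of the quadratic form in $\big(E,\frac{|\nabla v|^2}{v}\big)$.

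Once every term is nonnegative, the identity forces $|\nabla v|^2\equiv0$ whenever some coefficient is strictly positive. In the borderline case $\lambda=\frac{n}{q-2}$ with $q<2^*$, the coefficient $A>0$ strictly, so the conclusion still holds. Hence $u$ is constant, and the equation gives $u^{q-2}=\lambda$, that is, $u=\lambda^{\frac1{q-2}}$.
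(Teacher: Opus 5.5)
\textbf{Verdict: genuine gap.} Your framework is the right one, and it is in fact the paper's. The traceless part of $\nabla^2(u^m)$ equals $m u^{m-1}\bigl(u_{ij}+(m-1)\frac{u_iu_j}{u}-\frac1n(\Delta u+(m-1)\frac{|\nabla u|^2}{u})g_{ij}\bigr)$. So your traceless Hessian of $v=u^m$ (or of $\log u$) is, up to a factor, the tensor $E_{ij}$ of \eqref{invariant} at $p=2$ with $c=m-1$. But as written the proposal is not a proof. The whole content of the theorem is that exponents exist making every term signed, and that is exactly what you leave open. The weight exponent is literally a question mark, and $A,B,C$ are never computed. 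The claims ``one can arrange $A\geqslant0$'' and ``the remaining coefficient is a positive multiple of $\frac{n}{q-2}-\lambda$'' are asserted, not derived.

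Your treatment of the nonlinear term is also misdirected. The term $C\,v^{\sigma-1}|\nabla v|^2$ carries a different power of $u$ than the $\lambda$-term, so it cannot be combined with it pointwise. It has to be cancelled outright; giving it a positive coefficient instead costs you the endpoint $\lambda=\frac{n}{q-2}$. Cancelling it is a single linear condition on the exponent, and it can be met for every $q$.

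Concretely, in the paper's notation at $p=2$:
\begin{itemize}
\item Take $c=-\frac{n(q-1)}{n+2}$, i.e.\ $v=u^{m}$ with $m=\frac{2n+2-nq}{n+2}$. When $q=\frac{2n+2}{n}$, which lies in $(2,2^*)$, use $v=\log u$ instead.
\item The weight $u^{-\alpha}$ with $\alpha=\frac{2(q-1)}{n+2}$ then removes the cross term $E_{ij}u^iu^j/u$ exactly, so no discriminant optimization is needed.
\end{itemize}
With these choices one gets
\[
u^{\alpha}\nabla^i\bigl(u^{-\alpha}E_{ij}u^j\bigr)=E_{ij}E^{ij}+R_{ij}u^iu^j-\frac{n-1}{n}\lambda(q-2)|\nabla u|^2+\frac{(n-1)(n-2)(q-1)(2^*-q)}{(n+2)^2}\frac{|\nabla u|^4}{u^2}.
\]

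So $q<2^*$ is used only for the sign of the quartic coefficient, not for handling $u^{q-1}$. With $\operatorname{Ric}\geqslant(n-1)g$, the two middle terms together are at least $\frac{(n-1)(q-2)}{n}\bigl(\frac{n}{q-2}-\lambda\bigr)|\nabla u|^2\geqslant0$. Multiplying by $u^{-\alpha}$ and integrating over $M$ makes the left side vanish, which forces $\nabla u\equiv0$, including at $\lambda=\frac{n}{q-2}$. Your predicted final structure and your treatment of the borderline case are correct. The missing piece is this explicit computation, which is the proof itself.
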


As for the case $p>2$, we obtain the existence of nonconstant solutions via the stability of the Sobolev functional near a constant function.

\begin{theorem}\label{thm:q>2}
    Suppose $(M,g)$ is a closed Riemannian manifold of dimension $n\geqslant3$ satisfying $\operatorname{Ric}\geqslant (n-1)g$, $2<p<n$, and $p<q<p^*$. Then the equation \eqref{eq:q} admits a nonconstant positive solution for every $\lambda>0$.
\end{theorem}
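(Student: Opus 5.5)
We produce the nonconstant solution as a constrained minimizer of a Sobolev-type quotient and show that, for $p>2$, the constant function cannot be a minimizer. For $v\in W^{1,p}(M)$, $v\not\equiv0$, set
\[
Q_\lambda(v)=\frac{\int_M|\nabla v|^p+\lambda\int_M|v|^p}{\big(\int_M|v|^q\big)^{p/q}},\qquad m_\lambda=\inf_{v\not\equiv0}Q_\lambda(v).
\]

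\textbf{Step 1: existence of a positive minimizer.} Since $q<p^*$, the embedding $W^{1,p}(M)\hookrightarrow L^q(M)$ is compact on the closed manifold $M$. Hence $m_\lambda>0$ and it is attained by some $v\ge0$; we may replace $v$ by $|v|$, which does not change $Q_\lambda$. The Euler--Lagrange equation is $\Delta_p v-\lambda v^{p-1}+c\,v^{q-1}=0$ with $c>0$. By Tolksdorf/DiBenedetto regularity, $v\in C^{1,\alpha}$. Vázquez's strong maximum principle then gives $v>0$, since $q-1>p-1$ and the $\lambda v^{p-1}$ term is admissible. Rescaling $u=c^{1/(q-p)}v$ produces a positive solution of \eqref{eq:q}.

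\textbf{Step 2: the constant is not a minimizer.} It suffices to show $m_\lambda<Q_\lambda(1)=\lambda\operatorname{vol}(M)^{1-p/q}$. Then the minimizer $v$ is not constant, so $u$ is not constant either. (Alternatively, if $v$ were constant it would be a multiple of $1$ and attain $Q_\lambda(1)$, contradicting strict inequality.) Test with $v_\varepsilon=1+\varepsilon\varphi$, where $\varphi$ is a first nonconstant eigenfunction, $-\Delta\varphi=\lambda_1\varphi$ with $\int_M\varphi=0$. Expanding in $\varepsilon$:
\begin{itemize}
\item the gradient term contributes exactly $\varepsilon^p\int_M|\nabla\varphi|^p$;
\item the term $\lambda\int_M|v_\varepsilon|^p$ equals $\lambda\big(\operatorname{vol}(M)+\tfrac{p(p-1)}{2}\varepsilon^2\int_M\varphi^2\big)+O(\varepsilon^3)$, since the linear term vanishes;
\item the denominator equals $\operatorname{vol}(M)^{p/q}\big(1+\tfrac{p(q-1)}{2}\varepsilon^2\fint_M\varphi^2\big)+O(\varepsilon^3)$.
\end{itemize}
Combining these,
\[
Q_\lambda(v_\varepsilon)=Q_\lambda(1)\Big[1-\frac{p(q-p)}{2}\varepsilon^2\fint_M\varphi^2\Big]+\varepsilon^p\operatorname{vol}(M)^{-p/q}\int_M|\nabla\varphi|^p+O(\varepsilon^3).
\]
Since $p>2$, the $\varepsilon^p$ term is $o(\varepsilon^2)$. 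The second-order coefficient is strictly negative because $q>p$ and $\lambda>0$. Therefore $Q_\lambda(v_\varepsilon)<Q_\lambda(1)$ for small $\varepsilon>0$.

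\textbf{Main obstacle.} The only real point is that, in contrast with $p\le2$, the Dirichlet energy of a perturbation enters at order $\varepsilon^p$, which is higher than the order $\varepsilon^2$ at which the strictly negative zeroth-order contribution appears. This is why no spectral condition on $\lambda$ (such as the one in Theorem~\ref{thm:q}) is needed.

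Two routine verifications remain. The first is that the Taylor expansions are uniform; they are, because $\varphi$ is bounded and $1+\varepsilon\varphi>0$ for small $\varepsilon$. The second is the regularity and positivity in Step 1. The hypothesis $\operatorname{Ric}\ge(n-1)g$ is not needed beyond $M$ being closed, and $\varphi$ may be any nonconstant smooth function with zero mean.
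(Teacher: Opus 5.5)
Your proposal is correct and takes essentially the same route as the paper: minimize the scale-invariant Sobolev quotient via the compact embedding $W^{1,p}(M)\hookrightarrow L^q(M)$, get a positive solution through regularity, the strong maximum principle and the rescaling $u=c^{1/(q-p)}v$, and then rule out constants because for $p>2$ the gradient term of $1+\varepsilon\varphi$ is $O(\varepsilon^p)=o(\varepsilon^2)$, while the zeroth-order terms give a relative change of $-\frac{p(q-p)}{2}\varepsilon^2\fint_M\varphi^2<0$. Your expansions and the rescaling check out, and, as in the paper, the curvature hypothesis is never used.
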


The vector‑field method (differential‑identity method) was first introduced by Obata \cite{Oba71} to classify positive solutions of the Yamabe equation on closed manifolds.  Ma and Wu \cite{MW24} introduced the invariant‑tensor technique to systematically construct vector fields, which  has been applied to various semilinear equations on the Heisenberg group, in Cauchy--Riemann geometry, and for fourth-order equations. Classification of $p$-Laplacian equations is more difficult due to their quasilinear nature, even in $\mathbb R^n$. Serrin and Zou \cite{SZ02} derived a differential identity to prove the nonexistence of positive solutions to $-\Delta_p u=f(u)$ under a subcritical condition on $f$. Recently, Wu and Zhu reformulated this proof by generalizing the invariant tensor technique to the $p$-Laplacian case. After that, by considering a general semilinear term $\alpha(u)$ in the invariant tensor, Liang, Wu, and Yan \cite{LWY25} extended Serrin and Zou's result to an anisotropic case. See \cite{HSW2024, Ou2025} and others for recent developments for the $p$-Laplacian equations.

We now summarize the relevant regularity results for solutions to \eqref{eq:q}. A positive function $u$ is said to be a weak solution to \eqref{eq:q} if $u\in W^{1,p}(M)\cap L^\infty(M)$ satisfies the integral identity
$$\int_M |\nabla u|^{p-2}\langle \nabla u,\nabla\varphi\rangle+\lambda\int_M u^{p-1}\varphi-\int_M u^{q-1}\varphi=0,\quad\forall\varphi\in C^\infty(M).$$
By \cite{ACF23,DiB83,Heb99,Tol84}, we have the following properties:
\begin{enumerate}
    \item \textbf{Global $C^{1,\theta}$ regularity.} By the regularity theory for quasilinear elliptic equations on compact manifolds, there exists $\theta\in(0,1)$ such that $u\in C^{1,\theta}(M)$.

    \item \textbf{Critical set.} Let $\Omega_{\mathrm{cr}} = \{x\in M : \nabla u(x)=0\}$. Then $\Omega_{\mathrm{cr}}$ is a Lebesgue null set of $M$.

    \item \textbf{$H^2$ regularity away from critical points.} On $M\setminus\Omega_{\mathrm{cr}}$, $|\nabla u|$ is uniformly bounded away from zero, and  $u\in H^2(M\setminus\Omega_{\mathrm{cr}})$.
    
    \item \textbf{Global $W^{1,2}$ regularity of the main vector field.} $|\nabla u|^{p-2}\nabla u\in H^1(TM)$.

    \item \textbf{Weighted integrability of Hessian.}  $|\nabla u|^{p-2}\nabla^2 u \in L^2(M\setminus\Omega_{\mathrm{cr}})$.

    \item \textbf{Application in proofs.} All differential identities are first established on $M\setminus\Omega_{\mathrm{cr}}$, then extended to the whole manifold by finite open covering, partition of unity, and mollification. Terms involving connection and curvature are lower-order perturbations.
\end{enumerate}

This paper is structured as follows. In Section \ref{sec:id}, we prove Theorem \ref{thm:q} by establishing the differential identity \eqref{id} based on the invariant tensor $E_{ij}$. In Section \ref{sec:unstable}, we prove Theorem \ref{thm:q>2} by verifying the instability of the Sobolev functional at constant functions.
\section{The Liouville theorem: proof of Theorem \ref{thm:q}}\label{sec:id}

In local coordinates, we denote the metric tensor as $g_{ij}$, and the Ricci curvature tensor $\mathrm{Ric}$ as $R_{ij}$. We use the Einstein summation convention, raising and lowering indices with the metric $g_{ij}$ and its inverse $g^{ij}$. Throughout, $u$ denotes a positive solution to \eqref{eq:q}, and $u_i:=\nabla_i u$, $u_{ij}:=\nabla_j\nabla_i u$. All computations below are performed pointwise away from the critical set $\Omega_{\mathrm{cr}}$, using the regularity theory summarized in Section \ref{sec-introduction}.

Let $c\in\mathbb R$ be a constant to be determined. We define the trace-free tensor
\begin{equation}\label{invariant}
    E_{ij}=\nabla_j(|\nabla u|^{p-2}u_{i})+c\frac{|\nabla u|^{p-2}u_iu_j}{u}-\frac 1 n(\Delta_p u+c\frac{|\nabla u|^p}{u})g_{ij}.
\end{equation}
When $p=2$ and $c=0$, this $E_{ij}$ coincides with the classical traceless tensor in \cite{Oba71}.
Applying Ricci's identity yields
$$\nabla^i E_{ij}=\frac{n-1}{n}\nabla_j\Delta_p u+|\nabla u|^{p-2}R_{ij}u^i-\frac{n-1}{n}c\frac{|\nabla u|^p}{u^2}u_j+c\frac{\Delta_p u}{u}u_j+\frac{n-p}{n}c\frac{|\nabla u|^{p-2}}{u}u_{ij}u^i.$$
Expanding the term $\nabla_j(|\nabla u|^{p-2}u_i)$ in $E_{ij}$ gives
$$E_{ij}u^i=(p-1)|\nabla u|^{p-2}u_{ij}u^i+\frac{n-1}{n}c\frac{|\nabla u|^p}{u}u_j-\frac 1 n\Delta_p u u_j.$$
Differentiating equation \eqref{eq:q} yields $-\nabla_j\Delta_p u=(q-1)u^{q-2}u_j-\lambda (p-1)u^{p-2}u_{j}$. Substituting $E_{ij}u^i$ for $|\nabla u|^{p-2}u_{ij}u^i$ and using the expression for $\nabla_j\Delta_p u$, we obtain
\begin{align}\label{Eiji}
    \begin{split}
        \nabla^i E_{ij}=~&\frac{c}{p_*-1}\frac{E_{ij}u^i}{u}-\frac{n-1}{n}c\big(1+\frac{c}{p_*-1}\big)\frac{|\nabla u|^p}{u^2}u_j+|\nabla u|^{p-2}R_{ij}u^i\\
        &+\frac{n-1}{n}\lambda\big(p-1+\frac{p^*-1}{p_*-1}c\big)u^{p-2}u_j-\frac{n-1}{n}\big(q-1+\frac{p^*-1}{p_*-1}c\big)u^{q-2}u_j,
    \end{split}
\end{align}
To eliminate the term involving $u^{q-2}u_j$, we choose $c=-\frac{p_*-1}{p^*-1}(q-1)$. Combining these computations leads to the following identity.

\begin{lemma}
    Let $E_{ij}$ be defined as \eqref{invariant} and $c=-\frac{p_*-1}{p^*-1}(q-1)$. Then
    \begin{align}\label{id}
        \begin{split}
            &u^{\frac{p_*-2}{p^*-1}(q-1)}\nabla^i(u^{-\frac{p_*-2}{p^*-1}(q-1)}E_{ij}|\nabla u|^{p-2}u^j)\\
            =~&E_{ij}E^{ji}+|\nabla u|^{2p-4}R_{ij}u^iu^j+\frac{n-1}{n}\lambda(p-q)u^{p-2}|\nabla u|^p\\
            &+\frac{n-1}{n}\frac{(p_*-1)(q-1)(p^*-q)}{(p^*-1)^2}\frac{|\nabla u|^{2p}}{u^2}.
        \end{split}
    \end{align}
\end{lemma}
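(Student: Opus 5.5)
We prove the identity by expanding the divergence on the left-hand side with the product rule and then substituting \eqref{Eiji} with the chosen value of $c$. Set $\beta=\frac{p_*-2}{p^*-1}(q-1)$ and $V_j=E_{ij}|\nabla u|^{p-2}u^i$ (using that $E_{ij}$ is symmetric up to the identification below). Then
\begin{equation*}
u^{\beta}\nabla^i\big(u^{-\beta}E_{ij}|\nabla u|^{p-2}u^j\big)=-\beta\frac{E_{ij}u^i|\nabla u|^{p-2}u^j}{u}+(\nabla^iE_{ij})|\nabla u|^{p-2}u^j+E_{ij}\nabla^i(|\nabla u|^{p-2}u^j).
\end{equation*}
All computations are done on $M\setminus\Omega_{\mathrm{cr}}$, as stipulated.

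\textbf{Step 1 (the quadratic term).} By \eqref{invariant}, $\nabla^i(|\nabla u|^{p-2}u^j)=E^{ji}-c\frac{|\nabla u|^{p-2}u^iu^j}{u}+\frac1n(\cdots)g^{ij}$. Since $E$ is trace-free, the $g^{ij}$ part drops out, which gives $E_{ij}E^{ji}-\frac{c}{u}E_{ij}u^iu^j|\nabla u|^{p-2}$. Note that $E_{ij}$ need not be symmetric, because $\nabla_j(|\nabla u|^{p-2}u_i)$ is not. We therefore keep track of index order and contract only with $u^iu^j$, where the order is irrelevant. This is the step where I expect care is needed: we must check that the contraction in the divergence matches the index on which \eqref{Eiji} was computed, i.e.\ that $\nabla^i$ acts on the first index and $u^j$ pairs with the second.

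\textbf{Step 2 (substituting \eqref{Eiji}).} Contract \eqref{Eiji} with $|\nabla u|^{p-2}u^j$. With $c=-\frac{p_*-1}{p^*-1}(q-1)$, the $u^{q-2}$ term vanishes. The $\lambda$ coefficient becomes $p-1-(q-1)=p-q$, which gives $\frac{n-1}{n}\lambda(p-q)u^{p-2}|\nabla u|^p$. The Ricci term gives $|\nabla u|^{2p-4}R_{ij}u^iu^j$. The term $\frac{c}{p_*-1}\frac{E_{ij}u^iu^j}{u}|\nabla u|^{p-2}$ also appears.

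\textbf{Step 3 (cancellation of the mixed $E_{ij}u^iu^j$ terms).} The mixed terms collect to
\begin{equation*}
\Big(-\beta+\frac{c}{p_*-1}-c\Big)\frac{E_{ij}u^iu^j|\nabla u|^{p-2}}{u}.
\end{equation*}
We have $c\big(\frac{1}{p_*-1}-1\big)=-\frac{(2-p_*)(q-1)}{p^*-1}=\beta$, so this coefficient vanishes. This explains the choice of the weight exponent.

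\textbf{Step 4 (the remaining term).} The remaining term is $-\frac{n-1}{n}c\big(1+\frac{c}{p_*-1}\big)\frac{|\nabla u|^{2p}}{u^2}$. Here $1+\frac{c}{p_*-1}=\frac{p^*-q}{p^*-1}$, so the coefficient equals $\frac{n-1}{n}\frac{(p_*-1)(q-1)(p^*-q)}{(p^*-1)^2}$, which matches \eqref{id}.

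The only genuine obstacle is the bookkeeping of index order and signs in Step 1. The rest is algebra with the definitions of $p_*$ and $p^*$.
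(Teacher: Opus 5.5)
Your proof is correct and is essentially the paper's proof. You apply the product rule to the weighted field, substitute \eqref{Eiji} with the chosen $c$, and observe that $c\big(\frac{1}{p_*-1}-1\big)=\frac{p_*-2}{p^*-1}(q-1)$, so the weight exactly removes the $E_{ij}u^iu^j$ terms. The paper instead computes the unweighted divergence first, obtaining that multiple of $E_{ij}u^iu^j$, and then multiplies by the weight.

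The index issue you flag in Step 1 is settled by the statement itself, since $\nabla^i$ acts on the first index of $E_{ij}$ exactly as in \eqref{Eiji}. The only blemish is the unused field $V_j$ together with the remark that $E_{ij}$ is symmetric. That remark is false for $p\neq2$, contradicts your own Step 1, and should simply be deleted.
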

\begin{proof}
    By \eqref{Eiji}, the choice of $c$, and direct computations, we obtain
    \begin{align*}
        \begin{split}
            \nabla^i(E_{ij}|\nabla u|^{p-2}u^j)=~&\nabla^i E_{ij}|\nabla u|^{p-2}u^j+E_{ij}\nabla^i(|\nabla u|^{p-2}u^j)\\
            =~&E_{ij}E^{ji}+\frac{p_*-2}{p^*-1}(q-1)\frac{|\nabla u|^{p-2}}{u}E_{ij}u^iu^j+|\nabla u|^{2p-4}R_{ij}u^iu^j\\
            &+\frac{n-1}{n}\lambda(p-q)u^{p-2}|\nabla u|^p+\frac{n-1}{n}\frac{(p_*-1)(q-1)(p^*-q)}{(p^*-1)^2}\frac{|\nabla u|^{2p}}{u^2},
        \end{split}
    \end{align*}
    where we have used the identity $\nabla^j(|\nabla u|^{p-2}u^i)=E^{ij}-c\frac{|\nabla u|^{p-2}u^iu^j}{u}+\frac 1 n(\Delta_p u+c\frac{|\nabla u|^p}{u})g^{ij}$ in the last equality. Multiplying the vector field by $u^{-\frac{p_*-2}{p^*-1}(q-1)}$ completes the proof.
\end{proof}

Although $E_{ij}$ need not be symmetric, the structure of the tensor implies that $E_{ij}E^{ij}$ is non-negative, see \cite[Lemma 2.4]{LWY25}. For completeness, we include the following proof.
\begin{lemma}\label{|E|positive}
    Under the above notations, we have
    $$\frac{n-1}{n}|\nabla u|^2E_{ij}E^{ji}\geqslant E_{ij}u^jE^{ki}u_k\geqslant 0.$$
\end{lemma}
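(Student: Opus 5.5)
The plan is to show that, although $E_{ij}$ is not symmetric, it is a symmetric matrix conjugated by a positive definite one. This reduces both inequalities to elementary facts about a trace-free symmetric matrix. Throughout we work at a point of $M\setminus\Omega_{\mathrm{cr}}$, in an orthonormal frame, and regard tensors as $n\times n$ matrices with row index $i$ and column index $j$.

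\textbf{Factorization.} Expanding,
$$\nabla_j(|\nabla u|^{p-2}u_i)=|\nabla u|^{p-2}\Big(u_{ij}+(p-2)\frac{u_iu_ku_{kj}}{|\nabla u|^2}\Big)=(AS)_{ij},$$
where $S=(u_{ij})$ is the Hessian and $A=|\nabla u|^{p-2}(I+(p-2)\nu\otimes\nu)$ with $\nu=\nabla u/|\nabla u|$. Since $p>1$, $A$ is symmetric positive definite, and $A\nabla u=a\nabla u$ with $a=(p-1)|\nabla u|^{p-2}$. Hence the other two terms of $E_{ij}$ can also be written with a left factor $A$:
$$c\frac{|\nabla u|^{p-2}u_iu_j}{u}=\Big(A\,\tfrac{c}{(p-1)u}\nabla u\otimes\nabla u\Big)_{ij},\qquad g_{ij}=(AA^{-1})_{ij}.$$
Putting $\tau=\Delta_pu+c|\nabla u|^p/u$, this gives $E=AB$ with
$$B=S+\tfrac{c}{(p-1)u}\nabla u\otimes\nabla u-\tfrac{\tau}{n}A^{-1},$$
and $B$ is symmetric. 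Set $C=A^{1/2}BA^{1/2}$. Then $C$ is symmetric, $E=A^{1/2}CA^{-1/2}$, and so $\operatorname{tr}C=\operatorname{tr}E=0$.

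\textbf{Translating both sides.} By similarity, $E_{ij}E^{ji}=\operatorname{tr}(E^2)=\operatorname{tr}(C^2)=|C|^2$. For the middle term,
$$E_{ij}u^jE^{ki}u_k=(E\nabla u)\cdot(E^T\nabla u)=\nabla u^TE^2\nabla u=\nabla u^TABAB\nabla u.$$
Because $\nabla u$ is an eigenvector of $A$ (and of $A^{\pm1/2}$), the left factor gives $\nabla u^TA=a\nabla u^T$, so this equals $a\,|A^{1/2}B\nabla u|^2$. Also
$$B\nabla u=A^{-1/2}CA^{-1/2}\nabla u=a^{-1/2}A^{-1/2}C\nabla u,$$
hence $A^{1/2}B\nabla u=a^{-1/2}C\nabla u$. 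Therefore $E_{ij}u^jE^{ki}u_k=|C\nabla u|^2\geqslant0$, which is the second inequality.

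\textbf{Key algebraic step.} It remains to show $|Ce|^2\leqslant\frac{n-1}{n}|C|^2$ for a unit vector $e$ and a trace-free symmetric $C$. Choose an orthonormal basis with $e_1=e$. Then $|Ce|^2=C_{11}^2+\sum_{\alpha>1}C_{1\alpha}^2$. Using $\sum_{\alpha>1}C_{\alpha\alpha}=-C_{11}$ and Cauchy--Schwarz,
$$|C|^2\geqslant C_{11}^2+2\sum_{\alpha>1}C_{1\alpha}^2+\frac{C_{11}^2}{n-1}=\frac{n}{n-1}C_{11}^2+2\sum_{\alpha>1}C_{1\alpha}^2.$$
Multiplying by $\frac{n-1}{n}$ gives $\frac{n-1}{n}|C|^2\geqslant C_{11}^2+\frac{2(n-1)}{n}\sum_{\alpha>1}C_{1\alpha}^2\geqslant|Ce|^2$, since $\frac{2(n-1)}{n}\geqslant1$ for $n\geqslant2$. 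Applying this with $e=\nu$ and multiplying by $|\nabla u|^2$ gives the first inequality.

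The main obstacle is spotting the factorization $E=AB$ with $B$ symmetric. This depends on the identity $A\nabla u=(p-1)|\nabla u|^{p-2}\nabla u$, which lets the $c$-term and the $g_{ij}$-term be absorbed with a left factor of $A$. It also depends on $p>1$, so that $A^{1/2}$ exists. After that, everything is linear algebra.
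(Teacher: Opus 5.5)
Your proof is correct, and it takes a different, coordinate-free route to the same final inequality. The paper works in a normal frame with $e_1=\nabla u/|\nabla u|$ in which the Hessian is also diagonal on $\nabla u^{\perp}$. In that frame it reads off $E_{\alpha\beta}=0$ for $2\leqslant\alpha\neq\beta$ and the relation $E_{1\alpha}=(p-1)E_{\alpha1}$. It then obtains $E_{ij}u^jE^{ki}u_k=u_1^2\big(E_{11}^2+(p-1)\sum_{\alpha\geqslant2}E_{\alpha1}^2\big)$ and $E_{ij}E^{ji}=\sum_iE_{ii}^2+2(p-1)\sum_{\alpha\geqslant2}E_{\alpha1}^2$ by direct computation. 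It finishes with Cauchy--Schwarz on the diagonal entries using $\operatorname{tr}E=0$, exactly as in your last step.

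Your factorization $E=AB$ is the structure behind that relation; here $A$ is the linearization of $\xi\mapsto|\xi|^{p-2}\xi$ at $\nabla u$. In the paper's frame $A=\operatorname{diag}(a_1,\dots,a_n)$ with $a_1=(p-1)u_1^{p-2}$ and $a_\alpha=u_1^{p-2}$, so $C_{ij}=\sqrt{a_j/a_i}\,E_{ij}$ and $C_{1\alpha}=C_{\alpha1}=\sqrt{p-1}\,E_{\alpha1}$. Your identities $\operatorname{tr}(E^2)=|C|^2$ and $\nabla u^{T}E^2\nabla u=|C\nabla u|^2$ then reproduce the paper's two expressions term by term.

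The paper's computation is shorter and fully explicit. Yours explains why $E_{ij}E^{ji}\geqslant0$ despite the non-symmetry: $E$ is similar to a trace-free symmetric matrix. It also needs no diagonalization of the Hessian, and it uses only $A>0$ and $A\nabla u\parallel\nabla u$. So it should adapt with little change to other operators $\operatorname{div}(\phi(|\nabla u|)\nabla u)$ with positive definite linearization.
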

    
\begin{proof}
    Using geodesic normal coordinates centered at $x_0\in M$, we assume $u_1=|\nabla u|>0$, $g_{ij}=\delta_{ij}$ for $1\leqslant i,j\leqslant n$, $u_{ij}=0$ for $2\leqslant i<j\leqslant n$ at the point $x_0$. All computations below are performed at $x_0$. Then for $2\leqslant i,j\leqslant n$ with $i\neq j$, we have
    $$E_{ij}=0,\quad E_{i1}=(u_1)^{p-2}u_{i1},\quad E_{1i}=(p-1)(u_1)^{p-2}u_{1i}=(p-1)E_{i1}.$$
    
    We first show the non-negativity of the right-hand term:
    $$E_{ij}u^jE^{ki}u_k=u_1^2E_{11}^2+(p-1)u_1^2\sum_{i=2}^nE_{i1}^2\geqslant0.
    $$
    Using $\sum_{i=1}^nE_{ii}=0$, we compute
    \begin{align*}
        \frac{n-1}{n}|\nabla u|^2E_{ij}E^{ji}-E_{ij}u^jE^{ki}u_k=~&\frac{n-1}{n}u_1^2\sum_{i=1}^nE_{ii}^2-u_1^2E_{11}^2+\frac{n-2}{n}(p-1)u_1^2\sum_{i=2}^nE_{i1}^2\\
        \geqslant~&\frac{n-1}{n}u_1^2\sum_{i=2}^nE_{ii}^2-\frac{1}{n}u_1^2\left|\sum_{i=2}^nE_{ii}\right|^2\\
        =~&\frac{1}{n}u_1^2\sum_{2\leqslant i<j\leqslant n}|E_{ii}-E_{jj}|^2\geqslant0,
    \end{align*}
    which completes the proof.
\end{proof}

We now prove Theorem \ref{thm:q} using identity \eqref{id}.

\begin{proof}[Proof of Theorem \ref{thm:q}]
   By Lemma \ref{|E|positive} and the curvature condition $\operatorname{Ric}\geqslant(n-1)g$, we obtain
    \begin{align}\label{ineq:id}
        \begin{split}
            &\frac{n}{n-1}u^{\frac{p_*-2}{p^*-1}(q-1)}\nabla^i(u^{-\frac{p_*-2}{p^*-1}(q-1)}E_{ij}|\nabla u|^{p-2}u^j)\\
            \geqslant~&n|\nabla u|^{2p-2}+\frac{(p_*-1)(q-1)(p^*-q)}{(p^*-1)^2}\frac{|\nabla u|^{2p}}{u^2}-\lambda (q-p)|\nabla u|^pu^{p-2}.
        \end{split}
    \end{align}
    
    Using the Young inequality with $\frac p 2+\frac{2-p}{2}=1$ yields
    $$\lambda (q-p)|\nabla u|^pu^{p-2}\leqslant n|\nabla u|^{2p-2}+\frac{2-p}{2^{\frac{2}{2-p}}}(\frac p n)^{\frac{p}{2-p}}\big(\lambda(q-p)\big)^{\frac{2}{2-p}}\frac{|\nabla u|^{2p}}{u^2}.$$
    The assumption $1<p<2$ is used only here, specifically to ensure $\frac{2-p}{2}>0$. Substituting this into \eqref{ineq:id}, we conclude that
    \begin{align*}
        &\frac{n}{n-1}u^{\frac{p_*-2}{p^*-1}(q-1)}\nabla^i(u^{-\frac{p_*-2}{p^*-1}(q-1)}E_{ij}|\nabla u|^{p-2}u^j)\\
        \geqslant~&\Big(\frac{(p_*-1)(q-1)(p^*-q)}{(p^*-1)^2}-\frac{2-p}{2^{\frac{2}{2-p}}}(\frac p n)^{\frac{p}{2-p}}\big(\lambda(q-p)\big)^{\frac{2}{2-p}}\Big)\frac{|\nabla u|^{2p}}{u^2}.
    \end{align*}
    
    Multiplying the above inequality by $u^{-\frac{p_*-2}{p^*-1}(q-1)}$, the integral of the left-hand side over $M$ vanishes by Stokes' theorem. Since the right-hand side is nonnegative under \eqref{cond}, it must be identically zero. In particular, $|\nabla u|=0$, hence $u$ is constant.
\end{proof}

\section{The instability of the Sobolev functional: proof of Theorem \ref{thm:q>2}}\label{sec:unstable}

In this section, we focus on the case $2<p<n$. The key observation is that the functional
$$I_q[u]=\frac{\fint_M|\nabla u|^p+\lambda\fint_M|u|^p}{\big(\fint_M|u|^q\big)^{\frac p q}}$$
is unstable at the constant function $1$, that is, there exists $\varphi\in W^{1,p}(M)\setminus\{0\}$ such that
\begin{equation}\label{unstable}
    I_q[1+\varepsilon\varphi]<I_q[1]=\lambda\text{ for sufficiently small }|\varepsilon|.
\end{equation}

\begin{proof}[Proof of Theorem \ref{thm:q>2}]
    With the observation above, the remaining proof is routine. For the reader's convenience, we sketch the proof and divide it into two steps.
    
    \vspace{0.5em}
    \noindent\textbf{Step 1: the infimum of $I_q[u]$ is attained by a positive function satisfying \eqref{eq:q}.}
    
    Let $\{u_j\}$ be a minimizing sequence with $\fint_M|u_j|^q =1$ and
    $$\fint_M(|\nabla u_j|^p+\lambda|u_j|^p)=I_q[u_j]\rightarrow \inf_{u\in W^{1,p}(M)\setminus\{0\}}I_q[u].$$
    Since $\lambda>0$, we have the $W^{1,p}(M)$-boundedness of $\{u_j\}$. Thus, by the Rellich–Kondrachov compactness theorem, up to a subsequence, there exists $u\in W^{1,p}(M)$ such that
    $$u_j\rightharpoonup u\quad\text{in }W^{1,p}(M),\quad u_j\to u\quad\text{in }L^q(M),\quad\text{and}\quad\fint_M|u|^q=1.$$
    Moreover, weak lower semicontinuity gives
    $$\fint_M(|\nabla u|^p+\lambda|u|^p)\leqslant\liminf_{j\to\infty}\fint_M(|\nabla u_j|^p+\lambda|u_j|^p).$$
    Consequently, $u$ attains the infimum.

    Replacing $u$ by $|u|$ preserves both the constraint and the energy, since
    $$|\nabla |u||=|\nabla u|\quad\text{almost everywhere},\quad I_q[|u|]=I_q[u].$$
    We may therefore assume that the normalized minimizer satisfies $u\geqslant0$. By standard computations, we obtain the
    Euler--Lagrange equation
    \begin{equation}\label{eq-v}
      -\Delta_pu+\lambda u^{p-1}=\big(\fint_M\big(|\nabla u|^p+\lambda u^p\big)\big)u^{q-1}\quad\text{with }\fint_Mu^q=1.
    \end{equation}   
    
    The subcritical growth $u^{q-1}$ and the standard regularity theory for the
    $p$-Laplacian give a $C^{1,\alpha}$ representative of $u$.  Equation \eqref{eq-v} also implies $-\Delta_pu+\lambda u^{p-1}\geqslant0$. By the strong maximum principle,  $u$ is strictly positive since $\fint_Mu^q =1$. Hence $\inf_{\substack{w\in W^{1,p}(M)\\w\ne0}}I_q[w]=\min_{\substack{w\in W^{1,p}(M)\\w>0}}I_q[w]$.
    
    By rescaling, equation \eqref{eq-v} becomes \eqref{eq:q}.
    
    \vspace{0.5em}
    \noindent\textbf{Step 2: positive constants cannot be a minimizer of $I_q$.}
    
    Fix any $\varphi\in C^\infty(M)$ with $\fint_M\varphi=0$. For $|\varepsilon|\ll1$, we have $1+\varepsilon\varphi>0$. Thus, for $p>2$, 
    \begin{align*}
        I_q[1+\varepsilon\varphi]=~&\frac{\varepsilon^p\fint_M|\nabla \varphi|^p+\lambda\fint_M(1+\varepsilon\varphi)^p}{\big(\fint_M(1+\varepsilon\varphi)^q\big)^{\frac p q}}\\
        =~&\Big(\lambda+\lambda\frac{p(p-1)}{2}\varepsilon^2\fint_M\varphi^2+o(\varepsilon^2)\Big)\Big(1+\frac{q(q-1)}{2}\varepsilon^2\fint_M\varphi^2+o(\varepsilon^2)\Big)^{-\frac p q}\\
        =~&\lambda\Big(1-\frac{p(q-p)}{2}\varepsilon^2\fint_M\varphi^2+o(\varepsilon^2)\Big).
    \end{align*}
    Thus \eqref{unstable} holds by $q>p$, so the constant function $1$ cannot be a local or global minimizer of $I_q$. Since $I_q$ is invariant under the rescaling $u\mapsto cu$, the same conclusion holds for any positive constant function $c$. Together with Step 1, this implies the existence of a nonconstant positive minimizer, and Theorem \ref{thm:q>2} is proved.
\end{proof}

\vspace{1em}
\textbf{Acknowledgments.} 
Xi-Nan Ma and Tian Wu are supported by the National Key Research and Development Project (Grant No. 2025YFA1017600). Xi-Nan Ma is also supported by the National Natural Science Foundation of China (Grant No. 12141105). Part of the work was carried out while Wei Wei was a postdoctor at Fudan University, supported by BoXin Program. She is also partially supported by the NSFC (Grant No. 12571218, 12271244). Tian Wu and Hua Zhu are supported by University of Science and Technology of China-Southwest University of Science and Technology Counterpart Cooperation and Development Joint Fund (Grant No. KY0010002501) and the Open Research Fund of Hubei Key Laboratory of Mathematical Sciences (Central China Normal University, Wuhan 430079, P. R. China). Tian Wu is also supported by the Fundamental Research Funds for the Central Universities (Grant No. WK0010250106). Hua Zhu is also supported by the National Natural Science Foundation of China (Grant No. 12501273) and the Research Foundation of Southwest University of Science and Technology (Grant No. 25zx7153).

\noindent \textbf{Research ethics:} Not applicable.

\noindent \textbf{Informed consent:} Not applicable.

\noindent \textbf{Author contributions:} All authors have accepted responsibility for the entire content of this manuscript and approved its submission.

\noindent \textbf{Use of Large Language Models, AI and Machine Learning Tools:} None declared.

\noindent \textbf{Conflict of interest:} The authors state no conflict of interest.

\noindent \textbf{Data availability:} Not applicable.


\footnotesize{
    Welcome contact us:
    \begin{itemize}
        \item Xi-Nan Ma, School of Mathematical Sciences, University of Science and Technology of China, Hefei, Anhui, 230026, People's Republic of China. Email: \emph{xinan@ustc.edu.cn}
        \item Wei Wei, School of Mathematics, Nanjing University, Nanjing, Jiangsu, 210093, People's Republic of China. Email: \emph{wei\_wei@nju.edu.cn}
        \item Tian Wu, School of Mathematical Sciences, University of Science and Technology of China, Hefei, Anhui, 230026, People's Republic of China. Email: \emph{wt1997@ustc.edu.cn}
        \item Hua Zhu, School of Mathematical and Physics, Southwest University of Science and Technology, Mianyang, Sichuan, 621010, People's Republic of China. Email: \emph{zhuhmaths@mail.ustc.edu.cn}
    \end{itemize}
}

\end{document}